\theoremstyle{plain}
\newtheorem{theorem}{Theorem}
\newtheorem{proposition}{Proposition}
\newtheorem{lemma}{Lemma}
\def\A{\mathbb{A}}
\def\C{\mathbb{C}}
\def\Mat{\text{Mat}}
\begin{document}

\title{On the genericity of Eisenstein series and their residues for covers of $GL_m$}
\author{Solomon Friedberg}
\author{David Ginzburg}
\address{Department of Mathematics, Boston College, Chestnut Hill, MA 02467-3806, USA}
\address{School of Mathematical Sciences, Tel Aviv University, Ramat Aviv, Tel Aviv 6997801,
Israel}
\thanks{This work was supported by the US-Israel Binational Science Foundation,
grant number 2012019, and by the National Security Agency, grant number
H98230-13-1-0246 and the National Science Foundation, grant number 1500977 (Friedberg).}
\subjclass[2010]{Primary 11F30; Secondary 11F27, 11F55, 11F70, 17B08}
\keywords{Metaplectic group, Eisenstein series, residual representation, generic representation, 
unipotent orbit}
\begin{abstract}
Let $\tau_1^{(r)}$, $\tau_2^{(r)}$ be two genuine cuspidal automorphic representations on $r$-fold covers of the adelic points of
the general linear groups $GL_{n_1}$, $GL_{n_2}$, resp., 
and let $E(g,s)$ be the associated Eisenstein series on an $r$-fold cover of $GL_{n_1+n_2}$.  Then the value or
residue at any point $s=s_0$ of $E(g,s)$ is an automorphic form, and generates an automorphic representation.
In this note we show that if $n_1\neq n_2$ these automorphic representations (when not identically zero) are generic, while
if $n_1=n_2:=n$ they are generic except for residues at $s=\frac{n\pm1}{2n}$.
\end{abstract}

\maketitle

\section{introduction}\label{intro}

The study of the
Whittaker coefficients of Eisenstein series has been a major tool in the theory of automorphic forms.
See Shahidi \cite{Sh} and the references there.  Let $G$ be a reductive group defined over a number field $F$, 
$P$ be a maximal parabolic subgroup of $G$ with Levi decomposition $P=MU$, and let $\tau$
be an automorphic representation of  $M(\A)$.  Given $f_\tau$ in the space of $\tau$, one may form an 
Eisenstein series $E(g,s,f_\tau)$ on $G(\A)$; this is an automorphic form defined by an absolutely convergent 
sum for $\Re(s)$ sufficiently large and by analytic continuation in general (Langlands \cite{Lan}).  If $\tau$ is generic, then the global 
Whittaker coefficient of $E$ is computed by unfolding
the Eisenstein series and making use of the factorization of a global Whittaker functional into local ones,
relating it to integrals involving local Whittaker functionals for $\tau$.  These are computed at
unramified places using the Casselman-Shalika formula. One
finds that the Whittaker coefficient of $E$ may be expressed in terms of
certain Langlands $L$-functions for $\tau$.   Thus the question of whether or not the Eisenstein 
series (resp.\ its residue) at a given point is generic is related to the properties of the $L$-functions which appear.

The goal of this article is to study when a maximal parabolic Eisenstein series or its residue is generic for metaplectic covers
of the general linear group.  Recall that given a positive integer $r$, a cover of $GL_n(\A)$ may be defined whenever
$F$ contains a full set of $r$-th roots of unity.  However, many aspects of the
theory of automorphic forms change for covers.  In particular, the local
Whittaker model is not unique (this happens since the inverse image of the full torus in the cover is no longer abelian), 
and the approach outlined above does not apply. The global Whittaker coefficients may be computed as a Dirichlet series
for $\Re(s)$ sufficiently large (Brubaker-Friedberg \cite{B-F}), in fact one involving exponential sums of arithmetic interest, but it is not apparent how one could use this expression to determine non-vanishing at a particular point.
Also, though there
is a good understanding of the constant terms of the metaplectic Eisenstein series (Gao \cite{Gao}) the residual spectrum has been
computed in only a few cases (see for example Gao \cite{Gao2}), and in general it is not clear how to find
the Whittaker coefficients of the residues.  Indeed, the determination of such coefficients has been a long-standing open problem
even for low degree covers of $GL_2$ (see Eckhardt-Patterson \cite{E-P}, Chinta-Friedberg-Hoffstein \cite{C-F-H}).

In this paper we offer a new approach to the question of genericity.  This is based on the classification 
of Fourier coefficients via unipotent orbits. In our main result, we give a complete description of when such an Eisenstein series 
or its residue
may be generic.  Theorem~\ref{th1} treats the case of maximal parabolics with Levi isomorphic to $GL_{n_1}\times GL_{n_2}$ with 
$n_1\neq n_2$, and Theorem~\ref{th2} treats the case $n_1=n_2$.  Our method works with only minor differences
for an Eisenstein series or its residue.  However, it is possible that there is no generic residual spectrum for the general linear group induced from cuspidal datum except at the one point we must exclude (the residue at that point is in fact not always generic).  
This question is likely related to the
conjectured generalized Shimura correspondence (see Suzuki \cite{Suz}).

We consider only covers of the general linear group in this article.  In fact for 
covers of other groups we can prove a similar theorem only in special 
cases.  The difficulty in extending the method is due to the property that for the general linear group,
if $\pi$ is not generic then for every 
unipotent orbit in the set ${\mathcal O}(\pi)$ (see below for the definition), the corresponding Fourier coefficient can be written
as an integration over a unipotent subgroup which contains the constant term attached to the
unipotent radical of a maximal parabolic subgroup as an inner integration.
This does not happen in other classical or exceptional groups. Though we consider maximal parabolic subgroups, these methods can be adapted to 
more general parabolic subgroups of the general linear group.  

\section{Notation and Preliminaries}\label{notation}

Let $n_1,n_2\geq1$ and let $P_{n_1,n_2}$ denote the maximal parabolic subgroup of $GL_{n_1+n_2}$ whose Levi part
$M$ is isomorphic to $GL_{n_1}\times GL_{n_2}$ embedded in $GL_{n_1+n_2}$ by $(g_1,g_2)\mapsto \text{diag}(g_1,g_2)$. 
We will assume without loss that $n_1\ge n_2$.
Let $r\geq1$, and let
$F$ be a number field containing a full set of $r$-th roots of unity $\mu_r$. For each $m\geq1$ we let $GL_m^{(r)}(\A)$ denote
an $r$-fold cover of $GL_m(\A)$.  
This is a central extension of $GL_m(\A)$ by $\mu_r$; that is, it consists of ordered pairs $(g,\zeta)$ with $g\in GL_m(\A)$ and 
$\zeta\in \mu_r$, with multiplication 
$(g_1,\zeta_1)(g_2,\zeta_2)=(g_1g_2,\zeta_1\zeta_2\sigma(g_1,g_2))$ where $\sigma:GL_m(\A)^2\to \mu_r$ is a two-cocycle.
See Kazhdan-Patterson \cite{K-P}. There are cocycles giving rise to non-isomorphic extensions 
 but in fact they all agree on the subgroup $GL_m(\A)_0^{(r)}$ consisting of $(g,\zeta)$ with $\det(g)\in (\A^\times)^r F^\times.$
Because of this, we fix one such extension but it does not matter which for the sequel.  If $H$ is any subgroup of $GL_m(\A)$ we
let $\tilde{H}$ denote its inverse image in $GL_m^{(r)}(\A)$.

Let $\tau^{(r)}_i$, $i=1,2$, denote genuine irreducible cuspidal automorphic representations of the groups
$GL_{n_i}^{(r)}(\A)$. Then one can construct an Eisenstein series 
$E_\tau^{(r)}(g,s)$ on $GL_{n_1+n_2}^{(r)}(\A)$.  This depends on a choice of test vectors, but we shall suppress
this from the notation.  This construction is given in detail in Brubaker and Friedberg \cite{B-F}. Alternative constructions
are due to  Suzuki \cite{Suz} and to Takeda \cite{Tak}.  In all cases,
before inducing one must restrict to the subgroup $S:=GL_{n_1}(\A)_0^{(r)}\times_{\mu_r} GL_{n_2}(\A)_0^{(r)}$ 
of $\tilde{M}(\A)$ to take into account that the full
inverse images of the two $GL_{n_i}(\A)$ in $\tilde{M}(\A)$ do not commute.   Since our computations all take place inside
the subgroup of $GL_{n_1+n_2}^{(r)}(\A)$ generated by $S$ and by unipotents, the minor differences in these constructions does not affect the results.  The complex parameter $s$ is normalized so that $E_\tau^{(r)}(g,s)$ has functional equation under $s\mapsto 1-s$.

If $U$ is any unipotent subgroup of $GL_m$, $U(\A)$ splits in the $r$-fold cover
by means of the trivial section $u\mapsto (u,1)$.  We write $U(\A)$ for its isomorphic image in the cover $GL_m^{(r)}(\A)$ under
this section.  If 
$\varphi$ is any automorphic form on $GL^{(r)}_m(\A)$, let $\varphi^U$ denote the constant term of $\varphi$ along $U$
$$\varphi^U(g)=\int_{U(F)\backslash U(\A)} \varphi(ug)\,du.$$
Then the following lemma is standard and follows from the cuspidality of the representations 
$\tau_i^{(r)}$ (see for example M\oe glin-Waldspurger \cite{M-W}, II.1.7).

\begin{lemma}\label{cusp}
Let $E_\tau^{(r)}(g)$ be either a residue of $E_\tau^{(r)}(g,s)$ at a specific point $s_0$, or 
the value of $E_\tau^{(r)}(g,s)$ at  $s=s_0$. If $U$ is the unipotent radical 
of a parabolic subgroup of $GL_{n_1+n_2}$, then the constant term 
$E_\tau^{(r),U}(g)$ is zero for all choices of data unless $U$ is a conjugate of the unipotent radical of $P_{n_1,n_2}$ or of $P_{n_2,n_1}$. 
\end{lemma}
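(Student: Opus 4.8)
The plan is to reduce the statement to the classical constant-term computation for Eisenstein series attached to cuspidal data, carried out inside the subgroup of $GL_{n_1+n_2}^{(r)}(\A)$ generated by $S$ and by unipotents, and then to observe that passing to a residue or value in $s$ is harmless. First I would reduce to standard parabolics: every parabolic subgroup of $GL_{n_1+n_2}$ is conjugate to a standard one, and replacing $U$ by a conjugate merely translates the argument of $E_\tau^{(r)}$, so it suffices to show that $E_\tau^{(r),U_Q}(\,\cdot\,,s)\equiv 0$ in $g$, for all data, whenever $Q=LU_Q$ is a standard parabolic other than $P_{n_1,n_2}$, $P_{n_2,n_1}$, or $GL_{n_1+n_2}$ itself. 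To compute $E_\tau^{(r),U_Q}(g,s)=\int_{U_Q(F)\backslash U_Q(\A)}E_\tau^{(r)}(ug,s)\,du$ I would unfold using the Bruhat decomposition of $P_{n_1,n_2}(F)\backslash GL_{n_1+n_2}(F)/Q(F)$. All of the relevant group elements lie in the subgroup generated by $S$ and by the unipotent subgroups, on which the cover splits via the trivial section and the cocycle is trivial, so the computation proceeds exactly as for the linear group (Langlands \cite{Lan}): it writes the constant term as a finite sum, indexed by a set of Weyl-group representatives $w$, each summand of which---after applying the appropriate intertwining integral to the inducing section---is a constant term of the cuspidal datum $\tau_1^{(r)}\otimes\tau_2^{(r)}$ along the unipotent radical of the parabolic $w^{-1}Qw\cap M$ of $M$. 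This is precisely the formula recorded in M\oe glin--Waldspurger, II.1.7; the only thing to verify is that its derivation transfers to the cover, and this is essentially formal because the whole computation stays inside the group generated by $S$ and the (split) unipotent subgroups, so the non-uniqueness of the Whittaker model and the non-commutativity of the inverse image of the torus never enter.

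Second I would invoke cuspidality. A summand in this expression vanishes identically unless $w^{-1}Qw\cap M=M$, that is, unless $w^{-1}Qw\supseteq M$, equivalently $wMw^{-1}\subseteq L$; otherwise the summand contains the constant term of a cuspidal form on $GL_{n_1}^{(r)}$ or $GL_{n_2}^{(r)}$ along a proper parabolic, which is zero. Now $M=GL_{n_1}\times GL_{n_2}$ is a \emph{maximal} Levi of $GL_{n_1+n_2}$, and a Levi subgroup of $GL_{n_1+n_2}$ that contains a conjugate of $M$ is either all of $GL_{n_1+n_2}$ or is itself conjugate to $M$. The first case gives $Q=GL_{n_1+n_2}$, for which $U_Q$ is trivial (and the ``constant term'' is $E_\tau^{(r)}$ itself), and the second forces $Q$ to be a conjugate of $P_{n_1,n_2}$ or of $P_{n_2,n_1}$. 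Hence for every other standard $Q$ one gets $E_\tau^{(r),U_Q}(\,\cdot\,,s)\equiv 0$ for all choices of data, which is the assertion for the genuine Eisenstein series $E_\tau^{(r)}(g,s)$.

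Finally, for the value or residue at $s=s_0$: the constant-term integral runs over the compact group $U(F)\backslash U(\A)$ and $s\mapsto E_\tau^{(r)}(ug,s)$ is meromorphic with poles independent of $g$, so taking the value at $s=s_0$, or the residue there, commutes with this integral; the identical vanishing in $s$ established above therefore yields the vanishing of the constant term of $E_\tau^{(r)}(g)$ in exactly the same cases.

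I expect the one point that requires genuine care, rather than a citation, to be the first step: checking that the Langlands/M\oe glin--Waldspurger constant-term formula for Eisenstein series goes through verbatim on the $r$-fold cover. As indicated this should be essentially formal here, precisely because every manipulation can be confined to the subgroup generated by $S$ and the split unipotent subgroups; nevertheless this is where one must be attentive to the metaplectic bookkeeping.
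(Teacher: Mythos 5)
Your proposal is correct and follows the same route the paper takes: the paper simply declares the lemma standard, citing cuspidality of $\tau_i^{(r)}$ and M\oe glin--Waldspurger II.1.7, which is exactly the constant-term formula you spell out, combined with the observation that $M=GL_{n_1}\times GL_{n_2}$ is a maximal Levi and that the cover splits over unipotent subgroups so the computation transfers. Your added care about reduction to standard parabolics, the metaplectic bookkeeping inside the subgroup generated by $S$ and unipotents, and the commutation of value/residue at $s_0$ with the compact constant-term integral is a faithful expansion of what the paper leaves implicit.
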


Given an automorphic representation $\pi$ of a reductive group $G$, one may attach a set of integrals over unipotent groups to each
unipotent orbit $\mathcal O$ of $G(\C)$; we call these (generalized) Fourier coefficients.  See Ginzburg \cite{G1}.  The set 
${\mathcal O}(\pi)$ consists of the unipotent orbits $\mathcal O$ with two properties: first, $\pi$ has a non-zero Fourier
coefficient with respect to $\mathcal O$, and second, all Fourier coefficients with respect to unipotent orbits $\mathcal O'$
that are greater than or not comparable with $\mathcal O$ are identically zero.  The definitions extend to metaplectic covers
without change since all unipotent subgroups split in covers. For $G=GL_{n_1+n_2}$, unipotent orbits
correspond to partitions of $n_1+n_2$, by the Jordan decomposition.  

Let ${\mathcal E}_\tau^{(r)}$ be the automorphic representation generated by $E_\tau^{(r)}(g)$
in any one of the cases treated in Lemma~\ref{cusp}.
We have

\begin{proposition}\label{prop1}
The representation ${\mathcal E}_\tau^{(r)}$ is either generic, or ${\mathcal O}({\mathcal E}_\tau^{(r)})=(n_1n_2)$.
\end{proposition}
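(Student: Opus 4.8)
The plan is to show that the only unipotent orbit that can support a nonzero Fourier coefficient of $\mathcal{E}_\tau^{(r)}$, apart from the orbit $(n_1+n_2)$ corresponding to genericity, is the orbit $(n_1 n_2)$ attached to the partition $n_1^{n_2}$ of $n_1+n_2$ (recall $n_1\ge n_2$), and that orbits above or incomparable to it force the coefficient to vanish by Lemma~\ref{cusp}. First I would recall the structure of the set $\mathcal{O}(\mathcal{E}_\tau^{(r)})$: it consists of the maximal orbits (in the dominance order on partitions of $n_1+n_2$) for which $\mathcal{E}_\tau^{(r)}$ has a nonzero Fourier coefficient. So it suffices to prove a single dichotomy: either $\mathcal{E}_\tau^{(r)}$ is generic (orbit $(n_1+n_2)$), or every partition $\lambda$ of $n_1+n_2$ that does \emph{not} lie below or equal to $n_1^{n_2}$ in the dominance order gives a vanishing Fourier coefficient for $\mathcal{E}_\tau^{(r)}$. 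Combined with the fact that $\mathcal{E}_\tau^{(r)}\ne 0$, this forces $\mathcal{O}(\mathcal{E}_\tau^{(r)})\subseteq\{(n_1+n_2)\}\cup\{\lambda: \lambda\le n_1^{n_2}\}$, and maximality then pins it to $(n_1 n_2)$ in the non-generic case.

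The key mechanism, as flagged in the introduction, is that for $GL_N$ any Fourier coefficient attached to a partition $\lambda$ can be rewritten, after root exchange (the standard ``swap'' of unipotent integrations, valid over the cover since all unipotent subgroups split), as an integral containing as an inner integration the constant term along the unipotent radical of a suitable parabolic subgroup determined by $\lambda$. Concretely, for a partition $\lambda=(\lambda_1\ge\lambda_2\ge\cdots)$ one builds the standard unipotent $V_\lambda$ with its character $\psi_\lambda$ and observes that a conjugate of the maximal parabolic $P_{k,N-k}$ has unipotent radical sitting inside $V_\lambda$ as a subgroup on which $\psi_\lambda$ is trivial, for an appropriate $k$ depending on the shape of $\lambda$. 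Hence the $\lambda$-Fourier coefficient factors through $E_\tau^{(r),U}$ for $U$ the radical of some maximal parabolic, and by Lemma~\ref{cusp} this vanishes unless $U$ is conjugate to the radical of $P_{n_1,n_2}$ or $P_{n_2,n_1}$. So I would identify precisely which partitions $\lambda$ force the associated $k$ to avoid $\{n_1,n_2\}$ — these are exactly the partitions not dominated by $n_1^{n_2}$ (other than $(n_1+n_2)$ itself, where no proper parabolic appears and one genuinely gets the Whittaker coefficient) — and conclude vanishing for all of them. The partitions $\le n_1^{n_2}$ are handled differently: they are simply allowed, and among them $n_1^{n_2}$ — i.e.\ $(n_1n_2)$ — is the largest, so it is the only candidate to appear in the maximal set $\mathcal{O}(\mathcal{E}_\tau^{(r)})$.

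The main obstacle is the combinatorial bookkeeping in the second step: verifying, uniformly in $\lambda$, that the root-exchange manipulation really does exhibit the constant term along a maximal parabolic radical as an inner integral, and correctly reading off the value of $k$ so that one can invoke Lemma~\ref{cusp}. This requires a careful choice of the unipotent subgroup over which one integrates (it is typically larger than $V_\lambda$, chosen so that its integral still computes, up to the cuspidal vanishing, the same coefficient) together with the observation that whenever $\lambda\not\le n_1^{n_2}$ the first column of $\lambda$ has length exceeding $n_2$, which is exactly what makes the relevant $P_{k,N-k}$ fail to be conjugate to $P_{n_1,n_2}$ or $P_{n_2,n_1}$. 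A secondary point to check is that the splitting of unipotents in the cover and the commutation relations used in root exchange behave exactly as in the linear case — but this is precisely the remark made in the text that ``all unipotent subgroups split in covers,'' so no genuine new difficulty arises from the metaplectic setting. Once these verifications are in place, Proposition~\ref{prop1} follows immediately.
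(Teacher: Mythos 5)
Your proposal has a genuine gap: it can only ever give an upper bound on $\mathcal{O}({\mathcal E}_\tau^{(r)})$, while the Proposition asserts an equality. Recall that $\mathcal{O}(\pi)$ consists of orbits at which $\pi$ has a \emph{nonzero} Fourier coefficient (with everything greater or incomparable vanishing). Even if you carried out all the vanishing arguments you sketch, you would only conclude that every orbit in $\mathcal{O}({\mathcal E}_\tau^{(r)})$ lies below $(n_1n_2)$; your appeal to ``maximality'' does not pin the set to $(n_1n_2)$, because the largest orbit supporting a nonzero coefficient could perfectly well be strictly smaller. The nonvanishing of the $(n_1n_2)$-coefficient is the real content of the Proposition and is exactly what is used later (the proofs of Theorems~\ref{th1} and \ref{th2} rely on the nonvanishing of the coefficients \eqref{int4}, \eqref{int5} and \eqref{four1}). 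The paper's proof is organized around producing it: one starts from the nonvanishing of $I_1$ (an automorphic form cannot equal its constant term along a maximal parabolic radical), takes $k_0$ maximal with $I_{k_0}\neq 0$, shows by expanding along $V_{k_0+1}$ and invoking Lemma~\ref{cusp} that $k_0=n_1-1$ or $n_2-1$, and then climbs from $I_{k_0}$ to the coefficient \eqref{int4} (or \eqref{int5}) attached to $(n_1n_2)$ by successive expansions along the groups $V_m$, where at each stage either the non-constant contribution dies (maximality of $k_0$) or the constant contribution dies (Lemma~\ref{cusp}), so that nonvanishing is propagated all the way up. Nothing in your plan produces this nonvanishing.

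A secondary but substantive problem is that you misidentify the orbit. In the paper's notation $(n_1n_2)$ is the two-part partition $(n_1,n_2)$ of $n_1+n_2$ (compare $(n^2)=(n,n)$ in Section~\ref{eq} and the hook orbits $((k+1)1^{n_1+n_2-k-1})$), not $n_1$ repeated $n_2$ times, which is not even a partition of $n_1+n_2$ in general. As a result your combinatorial criterion (``$\lambda\not\le n_1^{n_2}$ forces the first column of $\lambda$ to exceed $n_2$'') is not the right dichotomy: the orbits greater than or incomparable to $(n_1,n_2)$ are precisely those with $\lambda_1>n_1$, and the way Lemma~\ref{cusp} enters is that the relevant inner constant term is along the radical of $P_{k,n_1+n_2-k}$ with $k\ge n_1+1$, hence $k\notin\{n_1,n_2\}$. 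Finally, your key reduction --- that after root exchanges every such $\lambda$-coefficient contains a maximal-parabolic constant term as an inner integration --- is asserted rather than proved; the paper avoids having to establish it in that generality by working only with the hook-type coefficients $I_k$, for which the expansion-plus-cuspidality mechanism is completely explicit.
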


\begin{proof}
We will assume that ${\mathcal E}_\tau^{(r)}$ is not generic, and prove that ${\mathcal O}({\mathcal E}_\tau^{(r)})=(n_1n_2)$. We fix some notations. 

For $1\le k\le n_1+n_2-1$, let $U_k$ denote the unipotent radical of the standard
parabolic subgroup of $GL_{n_1+n_2}$ whose Levi part is $GL_1^k\times GL_{n_1+n_2-k}$, embedded
in $GL_{n_1+n_2}$ by $(a_1,\ldots,a_k,h)\mapsto \text{diag}(a_1,\ldots,a_k,h)$. 
Let $\psi:F\backslash \A\to\C$ be a fixed nontrivial additive character, and let $\psi_{U_k}$ be the character of $U_k(F)
\backslash U_k(\A)$ given by
$$\psi_{U_k}(u)=\psi(u_{1,2}+u_{2,3}+\cdots +u_{k,k+1}).$$
For $1\le m\le 
n_1+n_2$, let $V_m$ denote the unipotent subgroup of $GL_{n_1+n_2}$ generated by all matrices of
the form $I_{n_1+n_2}+r_{m+1}e_{m,m+1}+\cdots +r_{n_1+n_2}e_{m,n_1+n_2}$, where $e_{i,j}$ is the
matrix whose $(i,j)$ entry is one and whose other entries are zero.  Notice that we can
identify the group $V_{k+1}$ with the quotient $U_{k+1}/U_k$.

For $1\le k\le n_1+n_2-1$ let $I_k$ denote the integral
\begin{equation}\label{int1}
I_k=\int\limits_{U_k(F)\backslash U_k({\A})}E_\tau^{(r)}(ug)\,\psi_{U_k}(u)\,du,
\end{equation}
where $E_\tau^{(r)}(g)$ is a vector in the space of ${\mathcal E}_\tau^{(r)}$.  
It follows from Ginzburg \cite{G1} that this Fourier coefficient is attached to the unipotent orbit  $((k+1)1^{n_1+n_2-k-1})$. Notice also that if $k=n_1+n_2-1$, then the Fourier
coefficient given by \eqref{int1} is the Whittaker coefficient. By our assumption it
is zero for all choices of data.

We will study the vanishing of $I_k$ as we vary over vectors in ${\mathcal E}_\tau^{(r)}$.  To begin,
expand $E_\tau^{(r)}(g)$ along
the unipotent group $U_1$. Since the Weyl group together with any one-parameter unipotent subgroup $x_\alpha(r)$ corresponding
to any root $\alpha$ generates the adelic special
linear group, it follows that a nontrivial automorphic form cannot equal its constant term
along the unipotent radical of a maximal parabolic.  We deduce that for $k=1$ the integral $I_k$
is not zero for some choice of data. 

Let $k_0$ be maximal such that the integral $I_k$ is not zero for some choice of data. We claim first
that $k_0\le n_1-1$. Indeed, suppose $k_0\ge n_1$. Since $k_0\le n_1+n_2-2$, we can expand \eqref{int1} along the group $V_{k_0+1}$. The group $GL_{n_1+n_2-k_0-1}(F)$ acts on the characters appearing in this
expansion with two orbits. First, the nontrivial orbit will contribute a sum such that each of
its summands is the integral $I_{k_0+1}$ (at varying $g$). By the maximality of $k_0$ this sum is zero. The second
contribution to the expansion is from the constant term. However, since $k_0\ge n_1$, it
follows from Lemma~\ref{cusp} that this term is also zero for all choices of data. This is a contradiction.
Thus $k_0\le n_1-1$. 

We next show that $k_0=n_1-1$ or $k_0=n_2-1$.  
Indeed, suppose that $n_2\le k_0\le n_1-1$. If $k_0\neq n_1-1$,  then
as in the previous case, we expand along the group $V_{k_0+1}$. We get
a contradiction, either to the maximality of $k_0$ or to Lemma~\ref{cusp}. Similarly, suppose that
$1\le k_0\le n_2-1$. Then arguing as above we deduce that $k_0=n_2-1$. 

Consider the case when $k_0=n_1-1$. Expand the integral \eqref{int1} along the group $V_{n_1}$. By
the maximality of $k_0$, the contribution to the expansion from the nontrivial orbit is zero.
Thus only the constant term contributes, and we deduce that $I_{n_1-1}$ is equal to
\begin{equation}\label{int2}
I_{n_1}^0=\int\limits_{U_{n_1}(F)\backslash U_{n_1}({\A})}E_\tau^{(r)}(ug)\,\psi(u_{1,2}+u_{2,3}+\cdots +u_{n_1-1,n_1})\,du.
\end{equation}
Next expand the integral \eqref{int2} along the group $V_{n_1+1}$. By Lemma~\ref{cusp}, the contribution
from the constant term is zero. From this we deduce that the integral
\begin{equation}\label{int3}
I_{n_1+1}^0=\int\limits_{U_{n_1+1}(F)\backslash U_{n_1+1}({\A})}E_\tau^{(r)}(ug)\,\psi_{U_{n_1+1}}^0(u)\,du\notag
\end{equation}
is not zero for some choice of data, where
$$\psi_{U_{n_1+1}}^0(u)=\psi(u_{1,2}+u_{2,3}+\cdots +u_{n_1-1,n_1}+u_{n_1+1,n_1+2}).$$
Continuing this process, we deduce that the integral
\begin{equation}\label{int4}
I_{n_1+n_2-1}^0=\int\limits_{U_{n_1+n_2-1}(F)\backslash U_{n_1+n_2-1}({\A})}E_\tau^{(r)}(ug)\,\psi_{U_{n_1+n_2-1}}^0(u)\,du
\end{equation}
is not zero for some choice of data, with
$$\psi_{U_{n_1+n_2+1}}^0(u)=\psi(u_{1,2}+u_{2,3}+\cdots +u_{n_1-1,n_1}+u_{n_1+1,n_1+2}+\cdots
+u_{n_1+n_2-1,n_1+n_2}).$$

A similar result may be obtained in the case when $k_0=n_2-1$. In this case expanding along
$V_{n_2}$, only the constant term contributes a nonzero term to the expansion. After this step, in expanding
in larger $V_m$, only the non-constant
terms contribute.  We deduce that the integral
\begin{equation}\label{int5}
I_{n_1+n_2-1}'=\int\limits_{U_{n_1+n_2-1}(F)\backslash U_{n_1+n_2-1}({\A})}E_\tau^{(r)}(ug)\,\psi_{U_{n_1+n_2-1}}'(u)\,du
\end{equation}
is not zero for some choice of data, where
$$\psi_{U_{n_1+n_2+1}}'(u)=\psi(u_{1,2}+u_{2,3}+\cdots +u_{n_2-1,n_2}+u_{n_2+1,n_2+2}+\cdots
+u_{n_1+n_2-1,n_1+n_2}).$$
Since the Fourier coefficients given by the integrals \eqref{int4} and \eqref{int5} are associated
with the unipotent orbit $(n_1 n_2)$, the Proposition follows. In particular in both cases the integrals are

not zero for some choice of data. 
\end{proof}

\section{Main Theorems}

\subsection{The case when $n_1>n_2$}\label{noteq}
Our first main theorem treats the case that the two factors of the Levi have different sizes. 
\begin{theorem}\label{th1}
Suppose that $n_1>n_2$, and that ${\mathcal E}_\tau^{(r)}$ is the automorphic representation generated by the value of 
$E_\tau^{(r)}(g,s)$ at a specific point $s=s_0$ or by the residue of $E_\tau^{(r)}(g,s)$ at a specific point $s=s_0$, and is not identically zero.
Then ${\mathcal E}_\tau^{(r)}$ is generic.  
\end{theorem}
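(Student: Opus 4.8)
Suppose ${\mathcal E}:={\mathcal E}_\tau^{(r)}$ is not identically zero and not generic. The plan is to use Proposition~\ref{prop1} to reduce to a single explicit Fourier coefficient, identify that coefficient with a Whittaker coefficient of the constant term of ${\mathcal E}$ along $P_{n_1,n_2}$, and then exploit the fact that $n_1>n_2$ forces this constant term to be very small. By Proposition~\ref{prop1}, ${\mathcal O}({\mathcal E})=(n_1n_2)$, so ${\mathcal E}$ has a nonzero Fourier coefficient attached to $(n_1n_2)$; I would take it in the form \eqref{int4} coming from the case $k_0=n_1-1$ in the proof of Proposition~\ref{prop1} (the case $k_0=n_2-1$, which produces \eqref{int5}, cannot occur here: for $GL$ the non-vanishing of a Fourier coefficient propagates to all smaller orbits, so ${\mathcal O}({\mathcal E})=(n_1n_2)$ forces the smaller coefficient $I_{n_1-1}$, attached to $(n_1\,1^{n_2})$, to be nonzero, whence $k_0=n_1-1$). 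The character in \eqref{int4} is the standard generic character of the maximal unipotent of $GL_{n_1+n_2}$ with the simple root $\alpha_{n_1}$ deleted, and $\alpha_{n_1}$ is the only simple root lying in the unipotent radical $R$ of $P_{n_1,n_2}$; since a generic character is trivial on all non-simple roots, this character is trivial on all of $R$. Writing the maximal unipotent as $N_M\cdot R$, where $N_M$ is the maximal unipotent of $M=GL_{n_1}\times GL_{n_2}$, and observing that the character restricts on $N_M$ to the Whittaker character $\psi_M$ of $M$, the integral \eqref{int4} unfolds trivially to the $\psi_M$-Whittaker coefficient of the constant term ${\mathcal E}^{R}$ of ${\mathcal E}$ along $P_{n_1,n_2}$.

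The hypothesis $n_1>n_2$ is used through the shape of ${\mathcal E}^{R}$. By the standard formula for the constant term of an Eisenstein series, valid for covers (cf.\ Gao \cite{Gao}), the constant term of $E_\tau^{(r)}(g,s)$ along $P_{n_1,n_2}$ is a sum over Weyl elements $w$ with $wMw^{-1}=M$; for a maximal parabolic of $GL_{n_1+n_2}$ the only candidate besides the identity is the element interchanging the two blocks, and when $n_1\neq n_2$ it carries $M=GL_{n_1}\times GL_{n_2}$ to the \emph{different} standard Levi $GL_{n_2}\times GL_{n_1}$. Hence for $n_1>n_2$ the constant term of $E_\tau^{(r)}(g,s)$ along $P_{n_1,n_2}$ is just the inducing section $f_{\tau,s}$ restricted to $\tilde M$, which is entire in $s$. (This is precisely the feature that fails for $n_1=n_2$, where the block-interchanging element preserves $M$, an intertwining-operator term appears, and its residue is the source of the exceptional residues of Theorem~\ref{th2}.) Consequently, if ${\mathcal E}$ is the residue of $E_\tau^{(r)}(g,s)$ at $s_0$, then ${\mathcal E}^{R}={\rm Res}_{s=s_0}f_{\tau,s}=0$, so by the previous paragraph the coefficient \eqref{int4} vanishes identically — contradicting ${\mathcal O}({\mathcal E})=(n_1n_2)$. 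Thus a nonzero residue is generic.

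If instead ${\mathcal E}$ is the value $E_\tau^{(r)}(g,s_0)$, then ${\mathcal E}^{R}=f_{\tau,s_0}|_{\tilde M}$, whose $\psi_M$-Whittaker coefficient vanishes unless both $\tau_1^{(r)}$ and $\tau_2^{(r)}$ are generic; so the non-vanishing of \eqref{int4} forces $\tau_1^{(r)},\tau_2^{(r)}$ to be generic, and one then wants to conclude that ${\mathcal E}$ itself is generic. For this I would invoke the non-vanishing of the global Whittaker coefficient of $E_\tau^{(r)}(g,s)$: by Brubaker--Friedberg \cite{B-F} it is, for $\Re(s)$ large, a Dirichlet series whose leading behavior is governed by the (nonzero) local Whittaker integrals of the generic $\tau_i^{(r)}$, hence not identically zero in $s$; choosing the inducing data suitably one gets non-vanishing at $s=s_0$, so ${\mathcal E}$ is generic. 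I expect the main obstacle to lie exactly in this last point for the value case — extracting non-vanishing of the Eisenstein Whittaker coefficient at the specific point $s_0$ from the Dirichlet series — together with the bookkeeping needed to guarantee that one may always reduce to the realization \eqref{int4} of the $(n_1n_2)$-coefficient; the residual case, which is the one of genuine interest, is settled cleanly by the first two paragraphs.
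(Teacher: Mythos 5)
Your residue argument is essentially correct, and it is the same mechanism as the paper's: the paper shows (its \eqref{id1}) that $I_{n_1-1}(g,s)$, which sits inside \eqref{int4} as an inner integration, equals the unfolded identity-cell term $f_{W_1^{(r)},\tau^{(r)}_2}(g,s)$, an entire function of $s$; hence its residue vanishes, so \eqref{int4} vanishes on the residue representation, contradicting Proposition~\ref{prop1}. Your formulation ``\eqref{int4} is the $\psi_M$-Whittaker coefficient of the constant term along $P_{n_1,n_2}$, and for $n_1>n_2$ that constant term is just the section, hence pole-free'' is a repackaging of exactly this. One caveat: your reduction to the realization \eqref{int4} rests on the asserted (not proved) principle that nonvanishing of Fourier coefficients propagates to all smaller orbits, and this assertion is load-bearing for you: if only \eqref{int5} were known to be nonzero, your argument yields nothing, since \eqref{int5} is the Whittaker coefficient of the constant term along the \emph{other} parabolic $P_{n_2,n_1}$, i.e.\ of the intertwining term, which need not vanish at a pole. (The paper relies for the same purpose on the closing assertion in the proof of Proposition~\ref{prop1} that \eqref{int4} is nonzero, so this input is comparable, but you should be aware it is a nontrivial one.)

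The genuine gap is the value case. You correctly observe that there the constant-term identity produces no contradiction, and you then propose to prove outright that the Whittaker coefficient of $E_\tau^{(r)}(g,s)$ is nonzero at the specific point $s=s_0$ by appealing to the Brubaker--Friedberg Dirichlet series. This is precisely what the paper says cannot currently be done (``it is not apparent how one could use this expression to determine non-vanishing at a particular point''): that series represents the Whittaker coefficient only for $\Re(s)$ large, and there is no known way to force nonvanishing of its continuation at a prescribed $s_0$, which may lie anywhere in the continued region; the entire design of the paper's method is to avoid this issue. The paper's device for the value case, which your proposal is missing, is a scaling argument: still assuming non-genericity, compute $I_{n_1-1}(g,s)$ a second way by conjugating with $w=\begin{pmatrix}&I_{n_2}\\ I_{n_1}&\end{pmatrix}$, performing a root exchange, and expanding along $V^0_{n_2+1}$, where ${\mathcal O}(E_\tau^{(r)}(\cdot,s))=(n_1n_2)$ kills the nontrivial orbit and leaves the constant term along $P_{n_2,n_1}$ (equations \eqref{id2}--\eqref{id5}). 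Evaluating both expressions at $g=\text{diag}(aI_{n_1},I_{n_2})$ with $a$ an $r$-th power gives homogeneity $|a|^{n_1n_2s}$ from \eqref{id6} but $|a|^{n_1n_2s-n_2}$ from \eqref{id7}, which is impossible once $I_{n_1-1}(e,s_0)=f_{W_1^{(r)},\tau^{(r)}_2}(e,s_0)\neq 0$ for suitable data (available since cuspidal representations on covers of $GL_{n_1}$ are generic). Without this second computation, or some substitute for it, your proof establishes only the residue case of Theorem~\ref{th1}.
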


\begin{proof}
Assume that $\Re(s)$ is sufficiently large that the Eisenstein series $E_\tau^{(r)}(g,s)$
is given as a convergent series. In this case, let $I_{n_1-1}(g,s)$ denote the integral 
\eqref{int1} with $k=n_1-1$. We shall compute this integral in two
ways. Assuming that this Eisenstein series is not generic, we shall derive a contradiction.

First, from the proof of Proposition~\ref{prop1} we have $I_{n_1-1}(g,s)=I_{n_1}^0(g,s)$.
Next we unfold the Eisenstein series. This leads us to consider representatives of the space   
of double cosets $P_{n_1,n_2}(F)\backslash GL_{n_1+n_2}(F)/P_{n_1,n_2}(F)$. 
Using the cuspidality of $\tau^{(r)}_i$ we see that only the identity contributes a non-zero term. So we find that
\begin{equation}\label{id1}
I_{n_1-1}(g,s)=I_{n_1}^0(g,s)=f_{W_1^{(r)},\tau^{(r)}_2}(g,s),
\end{equation}
where $W_1^{(r)}$ denotes the Whittaker coefficient of the representation $\tau^{(r)}_1$
and $f$ is the associated function in the induced space.

To compute $I_{n_1-1}(g,s)$ in a second way, let $w=\begin{pmatrix} &I_{n_2}\\ I_{n_1}&\end{pmatrix}$. Using the left-invariance properties of $E_\tau^{(r)}(g,s)$, we obtain
\begin{equation}\label{id2}
I_{n_1-1}(g,s)=\int\limits_{L(F)\backslash L({\A})}\int\limits_{N_{n_1}(F)\backslash 
N_{n_1}({\A})}E_\tau^{(r)}\left ( 
\begin{pmatrix} I_{n_2}&\\ &u_1\end{pmatrix} \begin{pmatrix} I_{n_2}&\\ l&I_{n_1}
\end{pmatrix}wg,s\right )\,\psi_{N_{n_1}}(u_1)\,du_1\,dl.
\end{equation}
Here $L\cong \Mat_{(n_1-1)\times n_2}$ is the subgroup of $\Mat_{n_1\times n_2}$ consisting of matrices with bottom row zero,
the group $N_{n_1}$ is the maximal upper triangular unipotent subgroup of $GL_{n_1}$, and $\psi_{{N}_{n_1}}$ is the Whittaker character of $N_{n_1}$. 

Next we perform a certain root exchange. The notion of root exchange was defined in Ginzburg-Rallis-Soudry
\cite{G-R-S}, Section 7.1. 
Let $Y_0\cong\Mat_{n_2\times (n_1-1)}$ be the subgroup of $\Mat_{n_2\times n_1}$ consisting of all matrices 
with first column zero.  
Then using \cite{G-R-S}, Lemma 7.1, we deduce that \eqref{id2} is equal to
\begin{multline}
\int\limits_{L({\A})}\int\limits_{Y_0(F)\backslash Y_0({\A})}
\int\limits_{N_{n_1}(F)\backslash N_{n_1}({\A})}
E_\tau^{(r)}\left ( \begin{pmatrix} I_{n_2}&y\\ &I_{n_1}\end{pmatrix}
\begin{pmatrix} I_{n_2}&\\ &u_1\end{pmatrix}  \begin{pmatrix} I_{n_2}&\\ l&I_{n_1}
\end{pmatrix}wg,s\right )\\ \times \psi_{N_{n_1}}(u_1)\,dy\,du_1\,dl.\notag
\end{multline}

Expand the above integral along the unipotent subgroup $V_{n_2+1}^0$ which consists of all
matrices of the form $I_{n_1+n_2}+r_1e_{1,n_2+1}+r_2r_{2,n_2+1}+\cdots + r_{n_2}e_{n_2,n_2+1}$.
The group $GL_{n_2}(F)$ acts on the characters appearing
in this expansion. It follows from Proposition \ref{prop1} and the assumption that $E_\tau^{(r)}(g,s)$ is not generic that ${\mathcal O}( E_\tau^{(r)}(\cdot,s))=(n_1 n_2)$. In particular this means that the contribution to the expansion from the nontrivial orbit is zero. 
Hence only the constant term of the expansion contributes.  
We see that
$I_{n_1-1}(g,s)$ is equal to
\begin{multline}\label{id4}
\int\limits_{L({\A})}
\int\limits_{N_{n_1}(F)\backslash 
N_{n_1}({\A})}\int\limits_{Y(F)\backslash Y({\A})}
E_\tau^{(r)}\left ( \begin{pmatrix} I_{n_2}&y\\ &I_{n_1}\end{pmatrix}
\begin{pmatrix} I_{n_2}&\\ &u_1\end{pmatrix} \begin{pmatrix} I_{n_2}&\\ l&I_{n_1}
\end{pmatrix}wg,s\right )\\ \times \psi_{N_{n_1}}(u_1)\,dy\,du_1\,dl.\notag
\end{multline}
Here $Y=\Mat_{n_2\times n_1}$. 

The integration over $Y$ is the constant term of the Eisenstein series along the unipotent radical of parabolic subgroup $P_{n_2,n_1}$. Unfolding the Eisenstein series, 
we obtain 
\begin{equation}\label{id5}
I_{n_1-1}(g,s)=\int\limits_{L({\A})}\int\limits_{Y({\A})}
f_{W_1^{(r)},\tau^{(r)}_2}\left (w^{-1} \begin{pmatrix} I_{n_2}&y\\ &I_{n_1}\end{pmatrix}
\begin{pmatrix} I_{n_2}&\\ l&I_{n_1}
\end{pmatrix}wg,s\right )\,dy\,dl.
\end{equation}

Now let $g=\begin{pmatrix} aI_{n_1}&\\ &I_{n_2}\end{pmatrix}$, where $a$ is an $r$-th power. Compare the expressions \eqref{id1} and \eqref{id5}. Write $I_{n_1-1}(a,s)$ for $I_{n_1-1}(g,s)$ for the above matrix $g$. From \eqref{id1} we obtain
\begin{equation}\label{id6}
I_{n_1-1}(a,s)=f_{W_1^{(r)},\tau^{(r)}_2}(g,s)=|a|^{n_1n_2s}f_{W_1^{(r)},\tau^{(r)}_2}(e,s)=
|a|^{n_1n_2s}I_{n_1-1}(e,s).
\end{equation}
However, substituting the matrix $g$ into integral \eqref{id5} and conjugating it to the left, we obtain 
\begin{equation}\label{id7}
I_{n_1-1}(a,s)=|a|^{n_1n_2s-n_2}I_{n_1-1}(e,s).
\end{equation}
Here we obtain a factor of $|a|^{(n_1-1)n_2}$ from the change of variables in $L$, and a factor of 
$|a|^{-n_1n_2}$ from the change of variables in $Y$. 

Expressions \eqref{id6} and \eqref{id7} produce a contradiction.  We conclude that
the Eisenstein series $E_\tau^{(r)}(g,s)$ is generic when $\Re(s)$ is large. Also, from the
meromorphic continuation of the Eisenstein series, we deduce that $I_{n_1-1}(g,s)$ is a meromorphic
function of $s$. If we consider the case when  ${\mathcal E}_\tau^{(r)}$ is a value of the Eisenstein series at a specific  point $s_0$, it follows from \eqref{id1} that $I_{n_1-1}(g,s_0)$ is not zero for some choice of data, and hence once again expressions \eqref{id6} and \eqref{id7} produce a contradiction. Finally, if ${\mathcal E}_\tau^{(r)}$ is a nonzero residue of the
Eisenstein series at a point $s_0$, then it follows  from the equality \eqref{id1} that 
the residue at $s_0$ of $I_{n_1-1}(g,s)$ is zero for all choices of data. However, this residue is an inner integration
in \eqref{int4} in the case that $E_\tau^{(r)}(g)={\rm Res}_{s=s_0} E_\tau^{(r)}(g,s)$.  This gives a contradiction to Proposition \ref{prop1}.

This completes the proof of Theorem~\ref{th1}.\end{proof}

\subsection{The case when $n_1=n_2$}\label{eq}
In this section we treat the case $n_1=n_2$.  This case is fundamentally different, for there are situations in which the 
representation ${\mathcal E}_\tau^{(r)}$ is not generic. 
Indeed, let $n_1=n_2=2$. Take a cuspidal theta representation $\Theta_\chi^{(2)}$ defined
on the group $GL_2^{(2)}({\A})$ where $\chi$ is a character of $F^\times\backslash {\A}^\times$ which is not the square of a character. These representations were constructed in Gelbart and Piatetski-Shapiro  \cite{G-PS}. Let 
$\tau_i^{(2)}=\Theta_\chi^{(2)}$ for $i=1,2$. Then it is not hard to check that the corresponding Eisenstein series has a simple pole at $s=3/4$, and the residue representation is not generic. Indeed, the unramified constituent of the residue at a finite place $\nu$ is the theta representation of the group $GL_4^{(2)}$ with $\chi_\nu$ as its central character. It is well known that this representation is not generic. Thus the residue is not generic.

We shall show:

\begin{theorem}\label{th2}
Suppose that $n_1=n_2=n$, and that ${\mathcal E}_\tau^{(r)}$ is the automorphic representation generated by the value of 
$E_\tau^{(r)}(g,s)$ at a specific point $s=s_0$,
and is not identically zero.  Then ${\mathcal E}_\tau^{(r)}$ is generic.  Suppose instead that ${\mathcal E}_\tau^{(r)}$ is the automorphic representation generated by the residue of $E_\tau^{(r)}(g,s)$ at a specific point $s=s_0$, and is not identically zero.
Then it is generic provided $s_0\neq \frac{n\pm1}{2n}$.
\end{theorem}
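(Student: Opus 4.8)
The plan is to run the argument of the proof of Theorem~\ref{th1} with one extra double coset tracked throughout; the new point is that when $n_1=n_2=n$ the Weyl element $w=\begin{pmatrix}&I_n\\ I_n&\end{pmatrix}$ normalizes the Levi of $P_{n,n}$ (interchanging its two $GL_n$ factors), so that $P_{n,n}(F)wP_{n,n}(F)$ is no longer killed by the cuspidality of $\tau_1^{(r)},\tau_2^{(r)}$. First I would assume ${\mathcal E}_\tau^{(r)}$ is not generic, so by Proposition~\ref{prop1} that ${\mathcal O}({\mathcal E}_\tau^{(r)})=(n^2)$, and work with the integral $I_{n-1}(g,s)$; as in the proof of Proposition~\ref{prop1} one has $I_{n-1}(g,s)=I_n^0(g,s)$. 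Unfolding the Eisenstein series for $\Re(s)$ large now leaves the cells of $e$ and of $w$, so in place of \eqref{id1} one gets, and continues meromorphically,
\[
I_{n-1}(g,s)=f_{W_1^{(r)},\tau_2^{(r)}}(g,s)+T_w(g,s),
\]
where $T_w(g,s)$ is the contribution of the $w$-cell, a Jacquet-type integral of the $w$-term of $E_\tau^{(r)}$ built now from the Whittaker coefficient of $\tau_2^{(r)}$ and a vector in $\tau_1^{(r)}$. Since $f_{W_1^{(r)},\tau_2^{(r)}}(g,s)$ is holomorphic in $s$, the poles of $T_w(g,s)$ are exactly the poles of $I_{n-1}(g,s)$, and these are among the poles of $E_\tau^{(r)}(\cdot,s)$. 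By Lemma~\ref{cusp} the only constant term of $E_\tau^{(r)}(\cdot,s)$ is the one along the radical of $P_{n,n}$, of the form $f+M(w,s)f$, so its poles are those of the metaplectic intertwining factor attached to $\tau_1^{(r)}\otimes\tau_2^{(r)}$; computing that factor from the metaplectic Gindikin--Karpelevich formula (Kazhdan--Patterson \cite{K-P}, Gao \cite{Gao}; cf.\ Brubaker--Friedberg \cite{B-F}) shows that, in the normalization with functional equation $s\mapsto1-s$, the poles of $E_\tau^{(r)}(\cdot,s)$, hence of $T_w(g,s)$, lie in $\{\tfrac{n-1}{2n},\tfrac{n+1}{2n}\}$ --- two points interchanged by $s\mapsto1-s$ (for $n=2$ this is the pole at $s=3/4$ of the example above).

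Granting this, the residue case is quick. Suppose $s_0\neq\tfrac{n\pm1}{2n}$ and that ${\mathcal E}_\tau^{(r)}=\mathrm{Res}_{s=s_0}E_\tau^{(r)}(g,s)$ is not generic. By Proposition~\ref{prop1} its Fourier coefficient attached to $(n^2)$ is nonzero for some data, and, as at the end of the proof of Theorem~\ref{th1}, this coefficient is obtained from $\mathrm{Res}_{s=s_0}I_{n-1}(g,s)$ as an inner integration in \eqref{int4}. But both $f_{W_1^{(r)},\tau_2^{(r)}}(g,s)$ and $T_w(g,s)$ are holomorphic at $s_0$, so $\mathrm{Res}_{s=s_0}I_{n-1}(g,s)=0$ for all data --- a contradiction. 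Hence ${\mathcal E}_\tau^{(r)}$ is generic for $s_0\neq\tfrac{n\pm1}{2n}$.

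For the value case, where $E_\tau^{(r)}$ is holomorphic at $s_0$ (so $s_0\notin\{\tfrac{n\pm1}{2n}\}$), I would assume ${\mathcal E}_\tau^{(r)}=E_\tau^{(r)}(\cdot,s_0)$ is not generic and compute $I_{n-1}(g,s_0)$ a second way by the root-exchange manipulation of Theorem~\ref{th1} --- legitimate because ${\mathcal O}(E_\tau^{(r)}(\cdot,s_0))=(n^2)$ kills the non-constant contribution in the $V_{n+1}^0$-expansion --- arriving at the analogue of \eqref{id5}, again with an additional $w$-cell term from unfolding the second time. Setting $g=\begin{pmatrix}aI_n&\\ &I_n\end{pmatrix}$ with $a$ an $r$-th power: on the ``unfolded'' side $f_{W_1^{(r)},\tau_2^{(r)}}$ contributes $|a|^{n^2s_0}$ and $T_w$ contributes $|a|^{n^2(1-s_0)}$ (by the functional equation), while on the ``root-exchanged'' side the change-of-variables bookkeeping of Theorem~\ref{th1} (a factor $|a|^{(n-1)n}$ from $L$ and $|a|^{-n^2}$ from $Y$) shifts these to $|a|^{n^2s_0-n}$ and $|a|^{n^2(1-s_0)-n}$. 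For $s_0\neq\tfrac12$ and $s_0\neq\tfrac{n\pm1}{2n}$ these four exponents are distinct, so matching the coefficients of $|a|^{n^2s_0}$ and of $|a|^{n^2(1-s_0)}$ (with $g$ replaced by $\begin{pmatrix}aI_n&\\ &I_n\end{pmatrix}h$, $h$ arbitrary) forces $f_{W_1^{(r)},\tau_2^{(r)}}(h,s_0)=0$ and the vanishing of the $w$-contribution, whence $I_{n-1}(g,s_0)\equiv0$ for all data; and when $s_0=\tfrac12$ the exponents coincide in pairs and comparing the homogeneities $|a|^{n^2/2}$ and $|a|^{n^2/2-n}$ gives $I_{n-1}(g,s_0)\equiv0$ at once. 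But the Fourier coefficient \eqref{int4} at $s_0$, which is built from $I_{n-1}(g,s_0)$, must be nonzero for some data by Proposition~\ref{prop1}; this contradiction completes the proof.

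The step I expect to be the main obstacle is the analytic input used above: showing that the metaplectic intertwining factor for the $r$-fold cover of $GL_{2n}$ induced from $\tau_1^{(r)}\otimes\tau_2^{(r)}$ on $GL_n\times GL_n$ is holomorphic away from $s=\tfrac{n\pm1}{2n}$, so that $T_w(g,s)$ acquires no other poles; this is what makes those two residual points exceptional. Everything else --- the reduction to $I_{n-1}$, the two unfoldings, and the $|a|$-homogeneity comparison --- runs in parallel with Theorem~\ref{th1}, the only genuinely new bookkeeping being the extra $w$-cell carried through both unfoldings.
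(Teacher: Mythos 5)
Your proposal hinges on an analytic input that is not available: the claim that the poles of $E_\tau^{(r)}(\cdot,s)$ (equivalently, holomorphy of your $w$-cell term $T_w(g,s)$ away from $s=\frac{n\pm1}{2n}$) can be read off from the metaplectic Gindikin--Karpelevich formula. For $r>1$ that formula only gives the \emph{local unramified} factors of the intertwining operator; globally, because local Whittaker models are not unique, the constant term along $P_{n,n}$ is governed by a matrix of Whittaker-type Dirichlet series whose meromorphic continuation and pole locations are precisely the open problem (this is the point the introduction makes about the residual spectrum being known only in a few cases, and it is what the conjectural generalized Shimura correspondence would address). For $r=1$ your claim is the classical Rankin--Selberg statement, but for general $r$ it is unproven, and your entire residue case rests on it; moreover, granting it would make the residue statement away from $\frac{n\pm1}{2n}$ vacuous (no poles, no residues), which is a far stronger assertion than the theorem, whose whole design is to say something about residues wherever they may occur without knowing where that is. A second, smaller gap: your value case silently excludes $s_0=\frac{n\pm1}{2n}$ (again via the unproven pole claim), but the theorem asserts genericity of nonzero \emph{values} at every $s_0$, including those two points when $E$ happens to be holomorphic there; your exponent-separation argument degenerates exactly there and you give no fallback.

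The paper's proof avoids all of this by a different mechanism. It does not track the extra $w$-cell in the Theorem~\ref{th1} unfolding at all. Instead, assuming non-genericity so that ${\mathcal O}({\mathcal E}_\tau^{(r)})=(n^2)$, it works with the Fourier coefficient \eqref{four1} attached to $(n^2)$ realized on the group $V_{2,n}$, which is invariant under the diagonal $GL_2$. If $r\nmid n$ this invariance lives on a nontrivial cover of $GL_2(\A)$ and forces a nonzero coefficient for the orbit $((n+1)(n-1))$, a contradiction; if $r\mid n$ one gets genuine $GL_2(\A)$-invariance, so $I(a,s)=I(e,s)$ is independent of $a$, while conjugating by a shuffle Weyl element, performing root exchanges, and unfolding the $P_{n,n}$-constant term (both cells) yields $I(a,s)=|a|^{2n^2s-n^2+n}I_1(s)+|a|^{-2n^2s+n^2+n}I_2(s)$. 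Comparing homogeneities in $|a|$ then \emph{produces} the exceptional points $\frac{n\pm1}{2n}$ as the only places where no contradiction arises, with no knowledge of the poles of $E_\tau^{(r)}$ required; a supplementary argument (as in \cite{F-G1}) handles values at those two points, leaving only the residues there unresolved. So the exceptional set comes out of exponent bookkeeping, not out of the analytic theory of metaplectic $L$-functions that your proposal presupposes.
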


\begin{proof}
As in the previous Section we shall assume that the representation ${\mathcal E}_\tau^{(r)}$ is not
generic and determine the cases in which we can derive a contradiction.   Though the argument is
similar to the one above, there is an important technical difference.  
When $n_1=n_2$, there is only one unipotent radical of a parabolic (up to conjugation) that may have a nonzero constant term,
rather than two distinct ones.  See Lemma~\ref{cusp}.

Let ${\mathcal E}_\tau^{(r)}$ be as above, and write $n=n_1=n_2$. Assume that $\Re(s)$ is large. It follows
from Proposition~\ref{prop1} that ${\mathcal O}({\mathcal E}_\tau^{(r)})=(n^2)$. The corresponding Fourier coefficient is described in a more general way  in Friedberg-Ginzburg \cite{F-G1}, equations (6) and (7). We recall its definition.

Consider the subgroup $V_{2,n}$ of $GL_{2n}$ consisting of all matrices of the form
\begin{equation*}
\begin{pmatrix} I_{2}&X_{1,2}&*&*&\cdots&*\\ &I_{2}&X_{2,3}&*&\cdots&*\\ &&I_{2}&X_{3,4}&\cdots&*\\
&&&I_{2}&\cdots&*\\ &&&&\ddots&*\\ &&&&&I_{2}\end{pmatrix}
\end{equation*}
with $I_2$ appearing $n$ times and each $X_{i,j}$ a matrix of size $2$. Define a character
$\psi_{V_{2,m}}$ on $V_{2,n}(F)\backslash V_{2,n}(\A)$ by 
$$\psi_{V_{2,n}}(v)=\psi(\text{tr} (X_{1,2}+X_{2,3}+\cdots +X_{n-1,n})).$$ 
The corresponding Fourier coefficient is given by
\begin{equation}\label{four1}
\int\limits_{V_{2,n}(F)\backslash V_{2,n}({\A})}E_\tau^{(r)}(vg,s)\,\psi_{V_{2,n}}(v)\,dv.
\end{equation}

Consider the diagonal embedding of $GL_2$ into $GL_{2n}$, $h\mapsto 
\iota(h)=\text{diag}(h,h,\ldots,h)$.  If $h\in GL_2(F)$, then it is immediate that the integral
\eqref{four1} is left-invariant under $\iota(h)$. However, if $r$ does not divide $n$, then $\iota$
does not extend to an embedding of $GL_2(\A)$ into the $r$-fold cover. 
This means that restricting to $g=\iota(h)$, the integral \eqref{four1} defines a $GL_2(F)$-invariant function 
on a non-trivial cover of $GL_2(\A)$, and which can thus not be the trivial
function. Arguing as in Friedberg-Ginzburg \cite{F-G1}, Proposition 3 ((9) and following), we deduce that
the representation ${\mathcal E}_\tau^{(r)}$ has a nontrivial Fourier coefficient corresponding
to the unipotent orbit $((n+1)(n-1))$. This a contradiction. 

Hence we may assume that $r$ divides $n$, and that the integral \eqref{four1} is left-invariant 
under all matrices $\iota(h)$ with $h\in GL_2({\A})$. Let $h=\begin{pmatrix} a&\\ &a^{-1}\end{pmatrix}$, and
let $I(a,s)$ denote the integral \eqref{four1} with $g=\iota(h)$. It follows from the 
left invariance properties of the above Fourier coefficient that $I(a,s)=I(e,s)$. 

Next we compute the integral \eqref{four1} in a different way. 
Let $w$ denote the Weyl element of $GL_{2n}$ with $w_{i,2i-1}=w_{n+i,2i}=1$ for all $1\le i\le n$ and all other entries zero. 
Conjugating by this element from left to right we obtain
\begin{multline}\label{four2}
I(a,s)=\int\limits_{L(F)\backslash L({\A})}
\int\limits_{N_{n}(F)\backslash N_{n}({\A})}\int\limits_{N_{n}(F)\backslash N_{n}({\A})} \int\limits_{Y(F)\backslash Y({\A})}\notag
E_\tau^{(r)}\left (\begin{pmatrix} u_1&y\\ &u_2\end{pmatrix}
\begin{pmatrix} I_n&\\ l&I_n\end{pmatrix}wg,s\right )\\ \times \psi_{N_{n}}(u_1)\,\psi_{N_{n}}(u_2)\,
dy\,du_1\,du_2\,dl.
\end{multline}
Here we change the notation from prior Sections:
the groups $L$ and $Y$ are each the group
consisting of all matrices in $x\in \Mat_{n\times n}$ such that $x_{i,j}=0$ for all $j\le i$. 

We now perform a series of root exchanges together with certain Fourier expansions. Using the fact
that ${\mathcal O}({\mathcal E}_\tau^{(r)})=(n^2)$, we obtain 
\begin{multline}
I(a,s)=\int\limits_{ L({\A})}
\int\limits_{N_{n}(F)\backslash N_{n}({\A})}\int\limits_{N_{n}(F)\backslash N_{n}({\A})} 
\int\limits_{Y'(F)\backslash Y'({\A})}\notag
E_\tau^{(r)}\left (\begin{pmatrix} u_1&y'\\ &u_2\end{pmatrix}
\begin{pmatrix} I_n&\\ l&I_n\end{pmatrix}wg,s\right )\\ \times \psi_{N_{n}}(u_1)\,\psi_{N_{n}}(u_2)\,
dy'\,du_1\,du_2\,dl.
\end{multline}
Here $Y'=\Mat_{n\times n}$. Notice that the integral over $Y'$ is the constant term along the unipotent radical
of the parabolic subgroup $P_{n,n}$. Hence, since $\Re(s)$ is large,  unfolding the Eisenstein series, we deduce that $I(a,s)$ is equal to  
\begin{equation}\label{four4}
\int\limits_{ L({\A})}
f_{W_1,W_2}\left (\begin{pmatrix} I_n&\\ l&I_n\end{pmatrix}wg,s\right)dl+
\int\limits_{ L({\A})}\int\limits_{Y'({\A})}
f_{W_1,W_2}\left (w_1\begin{pmatrix} I_n&y'\\ &I_n\end{pmatrix}\begin{pmatrix} I_n&\\ l&I_n\end{pmatrix}wg,s\right)dy'\,dl.\notag
\end{equation}
Here $w_1=\begin{pmatrix} &I_n\\ I_n&\end{pmatrix}$, and $W_i$ denotes the Whittaker coefficient of the representation $\tau_i^{(r)}$. 

Suppose that $a$ is an $r$-th power. Conjugating the matrix $g$ to the left we obtain 
\begin{equation}\label{four5}
I(a,s)=|a|^{2n^2s-n^2+n}I_1(s)+|a|^{-2n^2s+n^2+n}I_2(s).
\end{equation}
Here
\begin{equation}\label{first}
I_1(s)=\int\limits_{ L({\A})}
f_{W_1,W_2}\left (\begin{pmatrix} I_n&\\ l&I_n\end{pmatrix}w,s\right)dl\notag
\end{equation}
and 
\begin{equation}\label{second}
I_2(s)=\int\limits_{ L({\A})}\int\limits_{Y'({\A})}
f_{W_1,W_2}\left (w_1\begin{pmatrix} I_n&y'\\ &I_n\end{pmatrix}\begin{pmatrix} I_n&\\ l&I_n\end{pmatrix}w,s\right)dy'\,dl.\notag
\end{equation}
Since $I(a,s)=I(e,s)$ (and so is independent of $a$ and also non-zero), and since the numbers $2n^2s-n^2+n$ and $-2n^2s+n^2+n$ cannot vanish at the
same value of $s$,  \eqref{four5} will produce a contradiction unless either $ I_1(s)$ is zero
for all choices of data and $-2n^2s+n^2+n=0$, or $I_2(s)=0$ for all choices of data and $2n^2s-n^2+n=0$. From this we
deduce that for $\Re(s)$ large, the representation ${\mathcal E}_\tau^{(r)}$ must be generic. 
This is also true if $s$ is not equal to $\frac{n-1}{2n}$ or to $\frac{n+1}{2n}$.

Suppose that $s=\frac{n+1}{2n}$. Arguing similarly to Friedberg-Ginzburg \cite{F-G1}, it is not hard to check that
the integral $I_1(s)$ is zero for all choices
of data if and only if $f_{W_1,W_2}(e,s)$ is zero for all choices of data. Hence, unless 
${\mathcal E}_\tau^{(r)}$ is the residue at $s=\frac{n+1}{2n}$, then it must be generic. 
By the functional equation of the Eisenstein series  (M\oe glin-Waldspurger \cite{M-W}), the same holds at $s=\frac{n-1}{2n}$.

This completes the proof of the Theorem.
\end{proof}

In a similar way one may show that an Eisenstein series associated to a non-maximal parabolic formed from
irreducible cuspidal representations is once again
generic as long as the complex tuple $\mathbf s$ is in general position.

\end{document}